 \newtheorem{lemma}{Lemma}[section]
 \newtheorem{theorem}{Theorem}[section]
 \newtheorem{remark}{Remark}[section]
\newtheorem{corollary}{Corollary}[section]
\journal{Discrete Mathematics}
\begin{document}
\begin{frontmatter}

\title{On the edge connectivity of direct products with dense graphs}
\author{Wei Wang  }
\author{Zhidan Yan \corref{cor1}}
\cortext[cor1]{Corresponding author.
Tel:+86-997-4680821; Fax:+86-997-4682766.\\
\ead{yanzhidan.math@gmail.com}}

\address{College of Information Engineering, Tarim University, Alar 843300, China}

\begin{abstract}
Let $\kappa'(G)$ be the edge connectivity of $G$ and $G\times H$ the
direct product of $G$ and $H$. Let $H$ be an arbitrary dense graph
with minimal degree $\delta(H)>|H|/2$. We prove that for any graph
$G$, $\kappa'(G\times
H)=\textup{min}\{2\kappa'(G)e(H),\delta(G)\delta(H)\}$, where $e(H)$
denotes the number of edges in $H$. In addition, the structure of
minimum edge cuts is described. As an application, we  present a
necessary and sufficient condition for $G\times K_n(n\ge3)$ to be
super edge connected.

\end{abstract}

\begin{keyword}
direct product  graphs \sep edge connectivity  \sep minimum edge
cuts
\end{keyword}

\end{frontmatter}

\section{Introduction}
\label{intro} All graphs considered in this paper are finite,
undirected, loopless and without multiple edges. Let $G=(V(G),E(G))$
be a nontrivial graph. The \emph{edge connectivity} $\kappa'(G)$ is
the minimum number of edges whose removal disconnects $G$.  A
minimum disconnecting set of edges is necessarily an edge cut and is
also called a minimum edge cut. The \emph{direct product} $G\times
H$ has vertex set $V(G\times H)=V(G)\times V(H)$. Two vertices
$(x,u),(y,v)$ are adjacent when $xy\in E(G)$ and $uv\in E(H)$.

Weichsel observed half a century ago that the direct product of two
nontrivial graphs $G$ and $H$ is connected if and only if both
factors are connected and not both are bipartite graphs
\cite{weichsel1962}. For a long time, this result was the only one
that considered connectivity of direct product graphs. Recently,
Bre\v{s}ar and \v{S}pacapan \cite{bresar2008} obtained an upper
bound and a low bound on the edge connectivity of direct products.
The exact value of edge connectivity of direct products has been
given in special cases. Yang \cite{yang2007,yangappear} determined
the case when one factor is $K_2$. Based on this result, Ou
\cite{ou2011} presented a sufficient condition for $G\times H$ to be
super edge connected(A graph $G$ is  \emph{super edge connected} if
every minimum edge cut is the set of all edges incident with a
vertex in $G$). The explicit formula for edge connectivity of direct
products of two arbitrary graphs has not be found so far. This is
quite opposite to the case of other three products, namely, the
Cartesian product \cite{klavzar2008,xu2006}, the strong product
\cite{bresar2007,yang2008} and the lexicographic product
\cite{yang2007,yangappear}, where explicit formulae have been
obtained in terms of invariants of factor graphs. We mention that
some results on the (vertex) connectivity and super connectivity of
direct products of graphs have been obtained recently, see
\cite{guji2009,guo2010,mamut2008,wang2010,wangappear}.

In this paper, we investigate the case when one factor, say $H$, has
minimum degree $\delta(H)>|H|/2$. Note that this condition implies
$H$ is a connected nonbipartite graph.

\begin{theorem}\label{anydense}
Let $H$ be a graph with  $\delta(H)>|H|/2$. Then for any graph $G$,
$\kappa'(G\times
H)=\textup{min}\{2\kappa'(G)e(H),\delta(G)\delta(H)\}$.
\end{theorem}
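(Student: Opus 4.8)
The plan is to prove the two inequalities $\kappa'(G\times H)\le\min\{2\kappa'(G)e(H),\delta(G)\delta(H)\}$ and $\kappa'(G\times H)\ge\min\{2\kappa'(G)e(H),\delta(G)\delta(H)\}$ separately. For the upper bound I would first observe that $\deg_{G\times H}(x,u)=\deg_G(x)\deg_H(u)$, so $\delta(G\times H)=\delta(G)\delta(H)$ and hence $\kappa'(G\times H)\le\delta(G)\delta(H)$; for the remaining part I may assume $G$ is connected, since otherwise $\kappa'(G)=0$ and both sides of the theorem are zero (recall that $H$ is connected and nonbipartite, so $G\times H$ is disconnected by Weichsel's theorem whenever $G$ is). Then, taking a minimum edge cut $S$ of $G$ with vertex partition $V(G)=A\cup B$, each edge $xy\in S$ together with each edge $uv\in E(H)$ yields the two edges $(x,u)(y,v)$ and $(x,v)(y,u)$ of $G\times H$ joining $A\times V(H)$ to $B\times V(H)$, and deleting all $2|S|e(H)=2\kappa'(G)e(H)$ of these disconnects $G\times H$; hence $\kappa'(G\times H)\le 2\kappa'(G)e(H)$.

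For the lower bound, let $F$ be a minimum edge cut of $G\times H$; I may assume $G\times H$, and hence $G$, is connected. Write $F=[X,Y]$ for the associated partition of $V(G\times H)$ into nonempty parts, and for $x\in V(G)$ set $X_x=\{u:(x,u)\in X\}$ and $Y_x=\{u:(x,u)\in Y\}$. Partition $V(G)$ into $V_0=\{x:X_x=\emptyset\}$, $V_1=\{x:Y_x=\emptyset\}$ and $V_2=\{x:X_x\ne\emptyset\ne Y_x\}$. If $V_2=\emptyset$, then $V_0$ and $V_1$ are nonempty, $[V_0,V_1]$ is an edge cut of $G$, and $F$ is precisely the set of edges of $G\times H$ joining the fiber over $x$ to the fiber over $y$ taken over all $xy\in[V_0,V_1]$, of which there are $2e(H)$ for each such $xy$; hence $|F|=2e(H)\,|[V_0,V_1]|\ge 2\kappa'(G)e(H)$. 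If $V_2\ne\emptyset$, fix $x\in V_2$ and, for each neighbor $y$ of $x$ in $G$, let $\phi_{xy}$ be the number of edges of $F$ joining the fiber over $x$ to the fiber over $y$; I will show $\phi_{xy}\ge\delta(H)$ for every such $y$, and then, since these edge sets are pairwise disjoint subsets of $F$, I obtain $|F|\ge\deg_G(x)\,\delta(H)\ge\delta(G)\delta(H)$. In either case $|F|\ge\min\{2\kappa'(G)e(H),\delta(G)\delta(H)\}$, which together with the upper bound gives the theorem.

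It remains to prove that $\phi_{xy}\ge\delta(H)$ when $x\in V_2$ and $y$ is a neighbor of $x$, and this is the step I expect to be the main obstacle; it is also the only place where $\delta(H)>|H|/2$ is used. Put $n=|H|$, so that $n<2\delta(H)$ and hence (as $H$ is simple) $n\ge3$ and $\delta(H)\ge2$. If $y\in V_0$, then every edge between the two fibers lies in $F$, so $\phi_{xy}=\sum_{u\in X_x}\deg_H(u)\ge|X_x|\,\delta(H)\ge\delta(H)$, and the case $y\in V_1$ is symmetric. Suppose now $y\in V_2$, and set $a=|X_x|$, $b=|Y_x|=n-a$, $c=|X_y|$, $d=|Y_y|=n-c$. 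Counting the edges of $F$ between the two fibers by their endpoints in the fiber over $x$, and alternatively by their endpoints in the fiber over $y$, gives
\[
\phi_{xy}=\sum_{u\in X_x}|N_H(u)\cap Y_y|+\sum_{u\in Y_x}|N_H(u)\cap X_y|=\sum_{v\in Y_y}|N_H(v)\cap X_x|+\sum_{v\in X_y}|N_H(v)\cap Y_x|,
\]
and since every vertex of $H$ has degree at least $\delta(H)$ while $X_x\cup Y_x=X_y\cup Y_y=V(H)$, the four types of summands are bounded below by $\delta(H)-c$, $\delta(H)-d$, $\delta(H)-b$, $\delta(H)-a$ respectively.

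The claim now follows from a short case analysis. If $\max(a,b)<\delta(H)$, then $\phi_{xy}\ge d(\delta(H)-b)+c(\delta(H)-a)\ge c+d=n>\delta(H)$, and symmetrically if $\max(c,d)<\delta(H)$. Otherwise $\max(a,b)\ge\delta(H)$ and $\max(c,d)\ge\delta(H)$; using the $X\leftrightarrow Y$ symmetry of $\phi_{xy}$ I may assume $a\ge\delta(H)$, so that $b=n-a\le n-\delta(H)\le\delta(H)-1$. If moreover $c\ge\delta(H)$, then likewise $d\le\delta(H)-1$, and $\phi_{xy}\ge d(\delta(H)-b)+b(\delta(H)-d)=(b+d)\delta(H)-2bd\ge\delta(H)$, the last inequality being equivalent to $(b+d-1)\delta(H)\ge2bd$ and following from $\delta(H)\ge\max(b,d)+1$. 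If instead $d\ge\delta(H)$, then $c=n-d\le\delta(H)-1$ and $\phi_{xy}\ge\sum_{u\in X_x}|N_H(u)\cap Y_y|\ge a(\delta(H)-c)\ge\delta(H)(\delta(H)-c)\ge\delta(H)$. This exhausts the cases, so $\phi_{xy}\ge\delta(H)$ always, which completes the argument; each displayed bound above reduces to an elementary estimate using only $n<2\delta(H)$.
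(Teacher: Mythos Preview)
Your proof is correct and takes a genuinely different route from the paper's. Both arguments handle the upper bound and the case $V_2=\emptyset$ (every $H$-fiber lies entirely on one side of the cut) in essentially the same way; the divergence is in the case where some fiber $_xH$ is split. The paper establishes a reachability lemma (Lemma~\ref{fibercom}): starting from any vertex $(x,u)$, it counts vertices of $_xH$ reachable via paths of length two through a neighbouring fiber, and shows that if $|S|<\delta(G)\delta(H)$ then at least $\delta(H)>|H|/2$ vertices of $_xH$ are reached, forcing the entire fiber into one component. You instead prove the local estimate $\phi_{xy}\ge\delta(H)$ for each adjacent fiber $_yH$ by a case analysis on the four partition sizes $a,b,c,d$, and then sum over the $\deg_G(x)$ neighbours of $x$. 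Your argument is more elementary and fully self-contained for Theorem~\ref{anydense}; the paper's Lemma~\ref{fibercom}, on the other hand, is stated so that its equality analysis also delivers the structural classification of minimum cuts in Theorem~\ref{structure}, something your counting bound does not immediately yield.

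One small slip of wording: when $y\in V_0$ you say ``every edge between the two fibers lies in $F$'', but only the edges whose $x$-endpoint lies in $X$ do (those from $Y_x$ to $_yH$ join $Y$ to $Y$). Your displayed formula $\phi_{xy}=\sum_{u\in X_x}\deg_H(u)$ is nonetheless exactly the right count, so the bound $\phi_{xy}\ge\delta(H)$ and the rest of the argument are unaffected.
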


\begin{corollary}
$\kappa'(G\times
K_n)=\textup{min}\{n(n-1)\kappa'(G),(n-1)\delta(G)\}$ for $n\ge 3$.
\end{corollary}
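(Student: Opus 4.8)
The plan is to prove the inequalities $\le$ and $\ge$ separately, the first being routine. For the upper bound I would exhibit two disconnecting edge sets of $G\times H$. Since $(x,u)$ has degree $\deg_G(x)\,\deg_H(u)$, we have $\delta(G\times H)=\delta(G)\delta(H)$, so the edges at a minimum-degree vertex form a cut of that size; and if $F_0$ is a minimum edge cut of $G$ with sides $A,B$, the set of all $2e(H)\kappa'(G)$ edges of $G\times H$ projecting onto $F_0$ separates $A\times V(H)$ from $B\times V(H)$. Thus $\kappa'(G\times H)\le\min\{2\kappa'(G)e(H),\delta(G)\delta(H)\}$. (If $G$ is disconnected or has an isolated vertex, both sides of the asserted identity equal $0$, so from now on assume $G$ is connected with $\delta(G)\ge1$; then $G\times H$ is connected by Weichsel's criterion \cite{weichsel1962}, as $\delta(H)>|H|/2$ makes $H$ connected and non-bipartite.)

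For the lower bound, take a minimum edge cut $F$, let $C_1$ be a component of $G\times H-F$ and $C_2$ its complement, so $F=E_{G\times H}(C_1,C_2)$. Call a layer $H_x=\{x\}\times V(H)$ \emph{split} if it meets both $C_1$ and $C_2$. If no layer is split, the sets $A_i=\{x:H_x\subseteq C_i\}$ partition $V(G)$ into two non-empty parts, $E_G(A_1,A_2)$ is an edge cut of $G$, and $F$ is exactly the set of $2e(H)\,|E_G(A_1,A_2)|$ edges of $G\times H$ projecting onto $E_G(A_1,A_2)$; hence $|F|\ge 2\kappa'(G)e(H)$. So it remains to show that if some layer $H_x$ is split then $|F|\ge\delta(G)\delta(H)$.

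Here the idea is to analyse $F$ one $G$-edge at a time. For each $xz\in E(G)$, the edges of $G\times H$ joining $H_x$ to $H_z$ induce a copy of $K_2\times H$ (the bipartite double cover of $H$), inside which $C_1$ cuts out a vertex set that is non-empty and proper precisely because $H_x$ is split; so $F$ contains at least $\kappa'(K_2\times H)$ edges between $H_x$ and $H_z$. These sets are disjoint over the $\deg_G(x)\ge\delta(G)$ neighbours $z$ of $x$, giving $|F|\ge\delta(G)\,\kappa'(K_2\times H)$, and the whole theorem reduces to the identity $\kappa'(K_2\times H)=\delta(H)$. This base case, also obtainable from Yang's formulas for products with $K_2$ \cite{yang2007,yangappear}, is where $\delta(H)>|H|/2$ is genuinely used; I would prove $\kappa'(K_2\times H)\ge\delta(H)$ directly: a non-empty proper vertex set of the bipartite graph $K_2\times H$ is described by subsets $U,W$ of $V(H)$ in the two parts, a degree count bounds the corresponding edge cut below by $(|U|+|W|)\delta(H)-2|U||W|$, and applying this to the smaller side (so $|U|+|W|\le|H|$) and using $\delta(H)>|H|/2$ shows the bound is at least $\delta(H)$, the cases in which one whole part lies on one side of the cut being even easier.

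The main obstacle is exactly this package: the layer dichotomy plus the base case, i.e.\ showing that a split layer is ``expensive'' because across each incident $G$-edge it forces a genuine, hence large, edge cut of the bipartite double cover of the dense factor. Granting them, the two cases give $|F|\ge\min\{2\kappa'(G)e(H),\delta(G)\delta(H)\}$, which together with the upper bound proves Theorem~\ref{anydense}. The Corollary then follows by substitution: for $n\ge3$ we have $\delta(K_n)=n-1>n/2$ and $e(K_n)=\binom{n}{2}$, so $2\kappa'(G)e(K_n)=n(n-1)\kappa'(G)$ and $\delta(G)\delta(K_n)=(n-1)\delta(G)$.
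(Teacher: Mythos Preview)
Your argument is correct, and the derivation of the Corollary from Theorem~\ref{anydense} by substituting $H=K_n$ is exactly what the paper does. The interesting comparison is in how you establish the lower bound in Theorem~\ref{anydense} when some fiber $H_x$ is split. The paper does \emph{not} reduce to the $G=K_2$ case; instead (Lemma~\ref{fibercom}) it fixes a vertex $(x,u)$ in a small piece ${_xH}\cap C$, counts its surviving neighbours $A$ and then the surviving edges from $A$ back into ${_xH}\setminus(x,u)$, obtaining $|{_xH}\cap C|\ge\delta(H)-1$ and a contradiction when $|S|<\delta(G)\delta(H)$. Your route---observing that for each $z\sim x$ the pair of fibers $H_x\cup H_z$ carries a copy of $K_2\times H$ in which the split of $H_x$ forces a genuine edge cut, and then summing over neighbours---is cleaner and more modular: it isolates the use of the density hypothesis entirely in the single identity $\kappa'(K_2\times H)=\delta(H)$, which you could equally well quote from \cite{yang2007,yangappear}. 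Your degree-count sketch for that identity is right, though the step ``$(|U|+|W|)\delta(H)-2|U||W|\ge\delta(H)$ when $|U|+|W|\le|H|$'' needs the integrality of $\delta(H)$ (i.e.\ $\delta(H)\ge\lfloor|H|/2\rfloor+1$) to go through; with that, the inequality $(a+b-1)\delta\ge 2ab$ is a short case check. What the paper's longer argument buys is finer control of the equality cases in the chain (\ref{A})--(\ref{D}), which is precisely what is needed for the structural Theorem~\ref{structure}; your reduction, as stated, gives the formula but would not by itself classify the minimum cuts.
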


Under the same restriction on $H$ in Theorem \ref{anydense}, we also
characterize the structure of all possible minimum edge cuts of
$G\times H$. For $S_0\subseteq E(G)$ we let
$S=\{(x,u)(y,v),(x,v)(y,u):xy\in S_0,uv\in E(H)\}$ and call it
\emph{induced} by $S_0$. Note $|S|=2|S_0|e(H)$ and $G\times
H-S=(G-S_0)\times H$.

\begin{theorem}\label{structure}
Let $H$ be a graph with $\delta(H)>|H|/2$. Let $S$ be a minimum edge
cut of $G\times H$. Then either $S$ is induced by a minimum edge cut
of $G$, or S is the set of all edges incident with a vertex in
$G\times H$, with the exception that $G=K_2$ and
$H=\overline{K_{2l-1}}\vee lK_2$ for some $l$, where $\vee$ is the
natation for join of graphs and $H$ can be obtained from
$K_{2l-1,2l}$ by adding $l$ independent edges on its right part
containing $2l$ vertices.
\end{theorem}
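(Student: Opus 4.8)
We analyse a minimum edge cut $S$ of $G\times H$ by writing $S=[W,\overline W]$ with $W$ a connected component of $(G\times H)-S$, and slicing along the layers $H_x:=\{x\}\times V(H)$, $x\in V(G)$. (We may assume $G$ is connected, since otherwise $\kappa'(G)=\kappa'(G\times H)=0$ and $S=\emptyset$ is induced by the empty minimum edge cut of $G$; and $G$ is nontrivial by standing assumption, so $|G|\ge 2$.) The basic structural fact is that for every $xy\in E(G)$ the edges of $G\times H$ joining $H_x$ to $H_y$ form a copy of the bipartite double cover $H\times K_2$, which is connected because $H$ is connected and non-bipartite, and which by Theorem~\ref{anydense} applied with one factor equal to $K_2$ has edge connectivity $\min\{2e(H),\delta(H)\}=\delta(H)$. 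Writing $W_x=\{u:(x,u)\in W\}$ and splitting $V(G)=A\cup B\cup C$ into the layers lying entirely in $W$, entirely in $\overline W$, and split between the two, one gets $|S\cap[H_x,H_y]|=0$ when $x,y$ are on the same full side and $|S\cap[H_x,H_y]|\ge\delta(H)$ otherwise. If $C=\emptyset$, then $S$ is precisely the set of edges of $G\times H$ lying over the edge cut $E_G(A,B)$ of $G$, so $|S|=2e(H)\,|E_G(A,B)|$; comparing with $|S|=\kappa'(G\times H)\le 2\kappa'(G)e(H)$ forces $|E_G(A,B)|=\kappa'(G)$, whence $E_G(A,B)$ is a minimum edge cut of $G$ and $S$ is induced by it. This is the first alternative.

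Assume now $C\ne\emptyset$ and fix a split layer $H_{x_0}$. Since $H_{x_0}$ is an independent set, every $S$-edge meeting $H_{x_0}$ lies in some $S\cap[H_{x_0},H_y]$, so the number of $S$-edges incident with $H_{x_0}$ is $\sum_{y}|S\cap[H_{x_0},H_y]|\ge\deg_G(x_0)\,\delta(H)\ge\delta(G)\delta(H)\ge\kappa'(G\times H)=|S|$. Hence every edge of $S$ meets $H_{x_0}$, $\deg_G(x_0)=\delta(G)$, $|S|=\delta(G)\delta(H)$, and $|S\cap[H_{x_0},H_y]|=\delta(H)$ for each neighbour $y$ of $x_0$. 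Suppose $G\ne K_2$. Then $G-x_0$ has no isolated vertex (an isolated vertex would have unique neighbour $x_0$, forcing $\delta(G)=\deg_G(x_0)=1$ and then $G=K_2$), so every component $G_i$ of $G-x_0$ is non-trivial and $G_i\times H$ is connected; thus $(G-x_0)\times H$ is a disjoint union of connected blocks, each lying in $W$ or in $\overline W$. If all blocks lie in $W$, then $\overline W\subseteq H_{x_0}$, so $|S|=\deg_G(x_0)\sum_{u\in\overline{W_{x_0}}}\deg_H(u)\ge\delta(G)\,|\overline{W_{x_0}}|\,\delta(H)$, forcing $|\overline{W_{x_0}}|=1$ and $S$ to be the set of all edges at a vertex; the case of all blocks in $\overline W$ is symmetric. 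If some block lies in $W$ and another in $\overline W$, choose neighbours $y,y'$ of $x_0$ inside these blocks (possible since $G$ is connected); then $H_y\subseteq W$ and $H_{y'}\subseteq\overline W$, so $|S\cap[H_{x_0},H_y]|=\sum_{u\in\overline{W_{x_0}}}\deg_H(u)$ and $|S\cap[H_{x_0},H_{y'}]|=\sum_{u\in W_{x_0}}\deg_H(u)$ are both equal to $\delta(H)$, forcing $|W_{x_0}|=|\overline{W_{x_0}}|=1$, i.e. $|H|=2$, a contradiction. So for $G\ne K_2$ the cut $S$ is the set of all edges at a vertex.

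It remains to treat $G=K_2$, i.e. $G\times H=H\times K_2$, where $\kappa'(G\times H)=\delta(H)$ and we must show that every minimum edge cut is the set of all edges at a vertex unless $H=\overline{K_{2l-1}}\vee lK_2$. Let $S=[W,\overline W]$, let $W_0,W_1\subseteq V(H)$ be the traces of $W$ on the two copies of $H$, and partition $V(H)=P_1\cup P_2\cup P_3\cup P_4$ according to membership in $W_0$ and $W_1$ (so $P_1=W_0\cap W_1$, $P_4=\overline{W_0}\cap\overline{W_1}$, and $P_2,P_3$ are the two mixed classes). A direct edge-by-edge count of which of the two copies of each edge $uv\in E(H)$ ends up in $S$ gives, with $e_H(X)$ and $e_H(X,Y)$ denoting numbers of edges inside $X$ and between disjoint sets $X,Y$,
\[
|S|=2e_H(P_2)+2e_H(P_3)+2e_H(P_1,P_4)+e_H\!\left(P_2\cup P_3,\ P_1\cup P_4\right).
\]
Put $q=|P_2\cup P_3|$. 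If $q\le 1$, one checks, using connectedness of $H$ and $\delta(H)>|H|/2$, that $S$ is the set of all edges at a vertex, and $q=0$ is ruled out because $\delta(H)>|H|/2$ would force $2e_H(P_1,P_4)>\delta(H)$. Suppose $q\ge 2$. Rewriting the identity as $|S|=\sum_{v\in P_2\cup P_3}\deg_H(v)-2e_H(P_2,P_3)+2e_H(P_1,P_4)$ and using $e_H(P_2,P_3)\le|P_2||P_3|\le q^2/4$, $\sum_v\deg_H(v)\ge q\delta(H)$ and $q\le|H|\le 2\delta(H)-1$, one obtains $(q-1)\delta(H)\le q^2/2$, which together with $q\ge 2$ and $\delta(H)\ge 3$ forces $q=|H|=2\delta(H)-1$ (if $\delta(H)=2$ then $|H|=3$ and $H=K_3=\overline{K_1}\vee K_2$ is already of the exceptional form); hence $P_1=P_4=\emptyset$ and $V(H)=P_2\cup P_3$ with $2\bigl(e_H(P_2)+e_H(P_3)\bigr)=\delta(H)$. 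Finally, since $\sum_{v\in V(H)}\bigl(\text{degree of }v\text{ inside its own part}\bigr)=\delta(H)<|H|$ while $\delta(H)>|H|/2$, a counting argument shows that one part, say $P_2$, must be independent of size $2l-1$, the other part $P_3$ must induce a perfect matching $lK_2$, and all edges between $P_2$ and $P_3$ must be present, i.e. $H=\overline{K_{2l-1}}\vee lK_2$; conversely this $H$ does admit such a cut, which establishes the exception. I expect this final step — the analysis for $G=K_2$, and in particular squeezing the exceptional graph out of the single equality $|S|=\delta(H)$ — to be the main obstacle, the earlier reductions being comparatively routine.
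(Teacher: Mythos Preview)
Your proof is correct, but it follows a genuinely different route from the paper's.  The paper packages almost all the work into Lemma~\ref{fibercom}: assuming some fiber ${_xH}$ is split by $S$, it counts vertices of ${_xH}$ reachable from a fixed $(x,u)$ by paths of length two (inequalities (1)--(4)) to obtain $|{_xH}\cap C|\ge\delta(H)-1$; in the equality case it shows no $S$-edge meets a specific $(y,u)$, and for $|G|\ge 3$ deduces that the neighbouring fiber ${_yH}$ is intact, while for $G=K_2$ it bounds $e(C_1)+e(C_2)$ against $2e(H)$ (inequality (9)) to squeeze out the exceptional $H$.  The proof of Theorem~\ref{structure} itself is then a two-line appeal to Lemmas~\ref{quotient} and~\ref{fibercom}.

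You instead invoke Theorem~\ref{anydense} with one factor $K_2$ to get $\kappa'(H\times K_2)=\delta(H)$ and use this as a black box: any split layer forces $|S\cap[H_{x_0},H_y]|\ge\delta(H)$ for every neighbour $y$, which immediately pins down $|S|=\delta(G)\delta(H)$ and localises $S$ to the edges meeting $H_{x_0}$.  Your treatment of $G\neq K_2$ via the components of $(G-x_0)\times H$ is cleaner than the paper's (note your ``$G-x_0$ has no isolated vertex'' step is correct: an isolated $z$ would be a neighbour of $x_0$, and $\deg_G(x_0)=\delta(G)\le\deg_G(z)=1$ forces $z$ to be the \emph{only} neighbour of $x_0$, hence $G=K_2$).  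Your $G=K_2$ analysis via the four-part partition $P_1,\dots,P_4$ and the identity $|S|=\sum_{v\in P_2\cup P_3}\deg_H(v)-2e_H(P_2,P_3)+2e_H(P_1,P_4)$ is different in form from the paper's component-edge count but arrives at the same arithmetic constraint; the details you label as sketchy (forcing $|H|$ odd, $\delta(H)=(|H|+1)/2$, $H$ regular, $P_2$ independent, $P_3$ a matching) all go through as you indicate.  The trade-off: your argument is more modular and reuses Theorem~\ref{anydense}, while the paper's Lemma~\ref{fibercom} is self-contained and yields the slightly stronger standalone statement that fibers remain intact under \emph{any} edge set $S$ with $|S|<\delta(G)\delta(H)$, not only minimum cuts.
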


\begin{corollary}
Let $n\ge3$. With the exception that $G=K_2$ and $n=3$, $G\times
K_n$ is super edge connected if and only if $n\kappa'(G)>\delta(G)$.
\end{corollary}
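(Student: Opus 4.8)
The plan is to derive this from Theorem~\ref{structure} and the corollary to Theorem~\ref{anydense}, which for $H=K_n$ reads $\kappa'(G\times K_n)=\min\{n(n-1)\kappa'(G),(n-1)\delta(G)\}$. I begin by assembling the elementary ingredients. Since $n\ge3$ we have $\delta(K_n)=n-1>n/2$, so both theorems apply with $H=K_n$. As $K_n$ is connected and nonbipartite, Weichsel's theorem shows $G\times K_n$ is connected precisely when $G$ is, so we may assume $G$ connected (otherwise the statement is vacuous) and in particular $\delta(G)\ge1$. From $\deg_{G\times K_n}(x,u)=\deg_G(x)(n-1)$ we get $\delta(G\times K_n)=(n-1)\delta(G)$, and $e(K_n)=n(n-1)/2$, so a set of edges of $G\times K_n$ induced by some $S_0\subseteq E(G)$ has size $2|S_0|e(K_n)=n(n-1)|S_0|$. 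Finally I record that the exceptional graph of Theorem~\ref{structure}, $H=\overline{K_{2l-1}}\vee lK_2$, is $2l$-regular on $4l-1$ vertices; equating it to $K_n$ forces $n-1=2l$ and $n=4l-1$, hence $l=1$, $n=3$. Thus the only exceptional pair $(G,H)$ with $H=K_n$ is $(K_2,K_3)$, which is exactly the case excluded in the statement; for all other pairs Theorem~\ref{structure} tells us every minimum edge cut of $G\times K_n$ is either induced by a minimum edge cut of $G$ or is the set of all edges at one vertex.

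For the ``if'' direction, assume $n\kappa'(G)>\delta(G)$, i.e.\ $n(n-1)\kappa'(G)>(n-1)\delta(G)$; then $\kappa'(G\times K_n)=(n-1)\delta(G)$. Let $S$ be any minimum edge cut of $G\times K_n$. If $S$ were induced by a minimum edge cut $S_0$ of $G$, then $|S|=n(n-1)|S_0|=n(n-1)\kappa'(G)>(n-1)\delta(G)=\kappa'(G\times K_n)$, a contradiction; so $S$ must be the set of all edges incident with some vertex. Hence every minimum edge cut is a vertex star, i.e.\ $G\times K_n$ is super edge connected.

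For the ``only if'' direction I argue contrapositively: assume $n\kappa'(G)\le\delta(G)$, so $\kappa'(G\times K_n)=n(n-1)\kappa'(G)$. Choose a minimum edge cut $S_0$ of $G$ and let $S$ be the subset of $E(G\times K_n)$ it induces. Then $G\times K_n-S=(G-S_0)\times K_n$, which is disconnected because $G-S_0$ is, so $S$ is an edge cut; moreover $|S|=n(n-1)\kappa'(G)=\kappa'(G\times K_n)$, so it is a minimum one. It is not a vertex star: picking a vertex $(a,w)$ of $G\times K_n$ and using $n\ge3$ to choose two colours $u\neq v$ both different from $w$, for any $xy\in S_0$ the edge $(x,u)(y,v)$ belongs to $S$ but is incident to neither $(x,w)$ nor $(y,w)$, hence not to $(a,w)$. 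So some minimum edge cut is not a vertex star and $G\times K_n$ is not super edge connected. Combined with the ``if'' direction this proves the equivalence.

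The individual steps — the degree and edge-count identities, the disconnectedness of $(G-S_0)\times K_n$, and the observation that an induced cut can never be a vertex star when $n\ge3$ — are all routine. The part I would write out most carefully is the opening bookkeeping: verifying which branch of the minimum defining $\kappa'(G\times K_n)$ is active in each regime, and confirming that the lone exception in Theorem~\ref{structure} coincides with the lone exception claimed in the corollary, since that is where a slip would invalidate the whole argument.
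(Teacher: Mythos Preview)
Your argument is correct and is exactly the derivation the paper intends: the corollary is stated without proof as an immediate consequence of Theorem~\ref{structure} (together with the formula $\kappa'(G\times K_n)=\min\{n(n-1)\kappa'(G),(n-1)\delta(G)\}$), and you have faithfully carried out that specialization, including the check that $\overline{K_{2l-1}}\vee lK_2=K_n$ forces $l=1$, $n=3$. One cosmetic point: when you dispose of disconnected $G$ as ``vacuous'', it is really that both sides of the biconditional are false (and likewise for the single-vertex graph, which is connected but has $\delta(G)=0$); otherwise the write-up is clean.
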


\section{Proof of the main results}
For $x\in V(G)$, followed \cite{bresar2008}, we let
${_xH}=\{(x,u):u\in V(H)\}$ and call it the \emph{H-fiber} with
respect to $x$.

\begin{lemma}\label{quotient} Let $G$ and $H$ be graphs. For
$S\subseteq E(G\times H)$, define a new graph $G^*$ by \\
\textup{(1)}. $V(G^*)=\{{_xH}:x\in V(G)\}$,\\
\textup{(2)}. $E(G^*)=\{{_xH}{_yH}:E_{G\times H-S}({_xH}, {_yH})\neq
\emptyset\}$ , where $E_{G\times H-S}({_xH}, {_yH})$ denotes the
collection of all edges in $G\times H-S$ with one end in ${_xH}$ and
the other in ${_yH}$.

If $G^*$ is disconnected. Then either \\
\textup{(1)}. $|S|>2\kappa'(G)e(H)$, or\\
\textup{(2)}. $|S|=2\kappa'(G)e(H)$ and $S$ is induced by a minimum
edge cut of $G$.
\end{lemma}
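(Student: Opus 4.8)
The plan is to analyze the edges of $G\times H$ through the lens of the quotient graph $G^*$, whose vertices are the $H$-fibers. Suppose $G^*$ is disconnected, and let $(A^*,B^*)$ be a partition of $V(G^*)$ witnessing this, with no edges of $G\times H - S$ between the fibers in $A^*$ and those in $B^*$. Writing $A = \{x\in V(G): {_xH}\in A^*\}$ and $B = V(G)\setminus A$, I first observe that $(A,B)$ cannot be a partition that leaves $G$ disconnected-free in the following weak sense: since $H$ is connected and nonbipartite (this follows from $\delta(H)>|H|/2$, as noted in the introduction), for each edge $xy\in E(G)$ with $x\in A$, $y\in B$, the bipartite-like "double cover" structure forces many edges of $G\times H$ between ${_xH}$ and ${_yH}$. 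Precisely, for a fixed edge $xy$ of $G$, the set $E_{G\times H}({_xH},{_yH})$ consists of all $(x,u)(y,v)$ with $uv\in E(H)$, together with all $(x,v)(y,u)$ with $uv\in E(H)$; this is exactly $2e(H)$ edges (the two "directions" are disjoint because... here one uses that $H$ has no loops, and the multiplicity-2 count is the standard one). All of these must lie in $S$. Hence $|S| \ge 2\,e_G(A,B)\,e(H)$ where $e_G(A,B)$ is the number of edges of $G$ between $A$ and $B$.

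Now I split into cases according to whether $G$ is connected. If $G$ is connected, then $e_G(A,B)\ge \kappa'(G)$, so $|S|\ge 2\kappa'(G)e(H)$; and equality throughout forces $e_G(A,B)=\kappa'(G)$, i.e. the edges of $G$ between $A$ and $B$ form a minimum edge cut $S_0$ of $G$, and moreover $S$ must consist of \emph{exactly} the $2e(H)$ edges over each such $xy$ and nothing more — that is, $S$ is precisely the set induced by $S_0$ in the sense defined just before Theorem 1.2. This gives alternative (2) of the lemma (or (1) if the inequality is strict). If $G$ is disconnected, then $\kappa'(G)=0$, so alternative (2) would require $S=\emptyset$; but then $G\times H-S = G\times H$, and $G^*$ would be disconnected only if $G$ itself is — in which case $S_0=\emptyset$ is vacuously a minimum edge cut of the disconnected $G$ and again $S$ is "induced" by it. (A small bookkeeping check: the statement implicitly assumes $\kappa'(G)$ is defined; if $G$ has isolated vertices one should note $G\times H$ does too and the claim is trivially fine.)

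The key step — and the main obstacle — is the exact edge-count $|E_{G\times H}({_xH},{_yH})| = 2e(H)$ for each $xy\in E(G)$, together with the claim that these $2e(H)$ edges are forced entirely into $S$. The count itself is a direct consequence of the definition of the direct product: an edge between the fibers corresponds to an ordered choice of which coordinate of the $G$-edge gets which coordinate of an $H$-edge, and since $H$ is loopless the two orderings never coincide, giving the clean factor of $2$. The "forced into $S$" part is where disconnectedness of $G^*$ is used: if even one such edge survived in $G\times H - S$, then ${_xH}{_yH}\in E(G^*)$, contradicting that $A^*$ and $B^*$ lie in different components. Assembling these, the inequality $|S|\ge 2e_G(A,B)e(H)$ and the minimum-cut bound $e_G(A,B)\ge\kappa'(G)$ (valid whether or not $G$ is connected, reading $\kappa'(G)=0$ in the disconnected case) yield the dichotomy, with the equality analysis pinning down that $S$ is exactly induced by a minimum edge cut of $G$. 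No property of $H$ beyond being loopless is actually needed for this lemma — the hypothesis $\delta(H)>|H|/2$ will only be invoked in the companion lemmas that handle the case where $G^*$ is connected.
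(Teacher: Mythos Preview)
Your argument is correct and matches the paper's proof essentially line for line: partition $V(G^*)$, pull back to a partition $(A,B)$ of $V(G)$, observe that all $2e(H)$ edges of $G\times H$ lying over each $G$-edge crossing $(A,B)$ must belong to $S$, and conclude $|S|\ge 2\,e_G(A,B)\,e(H)\ge 2\kappa'(G)e(H)$, with the equality analysis forcing $S$ to be exactly the set induced by a minimum edge cut of $G$. Your side remarks about $\delta(H)>|H|/2$ and the connected/disconnected case split on $G$ are unnecessary here (as you yourself note at the end), but they do no harm.
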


 \begin{proof}
 Since $G^*$ is disconnected, the vertices of $G^*$ can be
 partitioned into two nonempty parts, $X^*$ and $Y^*$, such that
 there are no edges joining a vertex in $X^*$ and a vertex in $Y^*$.
 Let $X=\{x:{_xH}\in X^*\}$ and $Y=\{y:{_yH}\in Y^*$\}. Clearly, $(X,Y)$
 is a partition of $V(G)$ and $E(X,Y)$ is an edge cut of $G$.
 Therefore, $e(X,Y)\ge \kappa'(G)$ with equality if and only if
 $E(X,Y)$ is a minimum edge cut. For each edge $xy\in E(X,Y)$ and
 $uv\in E(H)$, both $(x,u)(y,v)$ and $(x,v)(y,u)$ must belong to $S$
 since otherwise ${_xH}{_yH}$ is an edge of $G^*$, contrary to the
 fact that no edges joining a vertex in $X^*$ and a vertex in $Y^*$.
 Let $S'$ be induced by $E(X,Y)$, that is, $S'=\{(x,u)(y,v),(x,v)(y,u):xy\in E(X,Y),uv\in E(H)\}$.
 Then  $S'\subseteq S$ and $|S|\ge2e(X,Y)e(H)$ with equality if and only if $S$ coincides with $S'$.
 The lemma follows directly from the above two
inequalities with  conditions for equalities.
\end{proof}

\begin{lemma} \label{fibercom} Let $G$ and $H$ be nontrivial connected graphs with
$\delta(H)>|H|/2$. Let $S\subseteq E(G\times H)$. Then each H-fiber
is contained in some component of $G\times H$ whenever
\textup{(1)}.$|S|<\delta(G)\delta(H)$. Moreover, with the exception
that $G=K_2$ and $H=\overline{K_{2l-1}}\vee lK_2$ for some $l$, the
same conclusion also holds whenever \textup{(2)}.
$|S|=\delta(G)\delta(H)$ and $S$ is not the collection of all edges
incident with any vertex in $G\times H$.
\end{lemma}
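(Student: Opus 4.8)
The plan is to argue by contradiction, treating parts (1) and (2) simultaneously: I shall assume that $|S|\le\delta(G)\delta(H)$ and that some $H$-fiber ${_xH}$ meets at least two components of $G\times H-S$, and deduce that $|S|=\delta(G)\delta(H)$ together with either ``$S$ is the set of all edges at one vertex'' or ``$G=K_2$ and $H=\overline{K_{2l-1}}\vee lK_2$''. The only feature of $H$ that I will use is that $\delta(H)>|H|/2$ forces any two vertices $p,q$ of $H$ to have at least $\deg_H(p)+\deg_H(q)-|H|\ge 1$ common neighbours; in particular each $H$-fiber lies in one component of $G\times H$ itself, since for $xy\in E(G)$ the subgraph induced on ${_xH}\cup {_yH}$ is a copy of $K_2\times H$, connected by Weichsel's theorem ($H$ being connected and nonbipartite). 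So the splitting is genuinely caused by $S$.

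Fix a component $C$ of $G\times H-S$ with $A:=\{u\in V(H):(x,u)\in C\}$ proper and nonempty, chosen with $|A|$ minimum over all fibers and components (so in particular $|A|\le|H|/2$); put $B=V(H)\setminus A$ and, for $y\in N_G(x)$, $W_y=\{w\in V(H):(y,w)\in C\}$. Inspecting which edges at ${_xH}$ cross between $C$ and its complement shows that these are exactly the edges $(x,u)(y,w)$ with $u\in A$, $w\notin W_y$, $uw\in E(H)$, together with the edges $(x,v)(y,w)$ with $v\in B$, $w\in W_y$, $vw\in E(H)$; summing over $y$ (the contributions being pairwise disjoint) gives $|S|\ge\sum_{y\in N_G(x)}\big(e_H(A,\overline{W_y})+e_H(B,W_y)\big)$, where $e_H(X,Y)$ counts the edges of $H$ with one designated end in $X$ and the other in $Y$. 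Since for any $W$ each vertex $z$ may be charged to whichever of $\overline W,W$ it is less adjacent to, $e_H(A,\overline W)+e_H(B,W)\ge \sum_{z\in V(H)}\min\{|N_H(z)\cap A|,|N_H(z)\cap B|\}=:f(A)$. Hence $|S|\ge\deg_G(x)\,f(A)\ge\delta(G)f(A)$, and everything hinges on the purely combinatorial claim
\[
f(A)\ \ge\ \delta(H)\qquad(\,\emptyset\ne A\subsetneq V(H)\,).
\]

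To prove the claim I would write $f(A)=\sum_{u\in A}\deg_H(u)-\big(e_H(Y,A)-e_H(Y,B)\big)$ with $Y=\{z:|N_H(z)\cap A|>|N_H(z)\cap B|\}$, and use $|A|\le|H|/2$. If $2|A|\le\delta(H)$ then no vertex can be over-adjacent to the small set $A$, so $Y=\emptyset$ and $f(A)=\sum_{u\in A}\deg_H(u)\ge|A|\delta(H)\ge\delta(H)$, with equality only when $|A|=1$ and the unique vertex of $A$ has minimum degree. The remaining range $\delta(H)/2<|A|\le|H|/2$ is the technical heart and the step I expect to be the main obstacle: there one must bound $e_H(Y,A)-e_H(Y,B)=\sum_{z\in Y}\big(2|N_H(z)\cap A|-\deg_H(z)\big)$ from above, exploiting that every $z\in Y$ is adjacent to more than half of the already small set $A$ and to all but fewer than $|H|/2$ vertices of $H$, which keeps both $|Y|$ and the per-vertex imbalance $2|N_H(z)\cap A|-\deg_H(z)\le 2|A|-\delta(H)$ small enough that $\sum_{u\in A}\deg_H(u)-\big(e_H(Y,A)-e_H(Y,B)\big)\ge\delta(H)$ still holds; this should need a short case analysis in $|A|$ and $|H|$, and it should simultaneously pin down the equality profile (each $z\in A$ contributing $0$, each $z\in B$ contributing $1$, $|A|=2l-1$, $\delta(H)=2l$).

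Feeding $f(A)\ge\delta(H)$ back yields $|S|\ge\delta(G)\delta(H)$, so all inequalities are tight: $\deg_G(x)=\delta(G)$, $S=\partial C$, every edge of $\partial C$ meets ${_xH}$ (equivalently no edge of $(G-x)\times H$ crosses between $C$ and its complement, so $C\setminus{_xH}$ is a union of components of $(G-x)\times H$), each $y$-term equals $\delta(H)$, and $f(A)=\delta(H)$. I then split on $G$. If $G\ne K_2$ and $C$ is not a single vertex, then $C$ contains some $(y_1,w_1)$ with $y_1\in N_G(x)$ whose whole fiber ${_{y_1}H}$ lies in $C$ (since $\deg_G(y_1)\ge\delta(G)$ forces $y_1$ into a nontrivial component of $G-x$, or, when $\delta(G)=1$, a short separate estimate already makes $|\partial C|$ too large); counting the edges from that fiber to $\{x\}\times B$ and from $\{x\}\times A$ to the fibers of the $C$-side and $Z$-side neighbours of $x$ forces $|\partial C|>\deg_G(x)\delta(H)=|S|$, a contradiction. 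So for $G\ne K_2$ the component $C$ is a single vertex $\{(x,u_0)\}$ and $S$ is the set of its incident edges, which is allowed. If $G=K_2$, say $V(G)=\{x,y\}$, then $(G-x)\times H$ has no edges, $W:=W_y$ may be arbitrary, $C=(\{x\}\times A)\cup(\{y\}\times W)$, and the tightness of $f$ with its equality profile reconstructs $H=\overline{K_{2l-1}}\vee lK_2$ with $A$ the independent part $\overline{K_{2l-1}}$ and $W=lK_2$ (the degenerate instance $l=1$, $H=K_3$, appearing inside the $|A|=1$ case), while $C=\{(x,u_0)\}$ recovers the allowed ``all edges at a vertex'' alternative; a direct check confirms that in the reconstructed case $S=\partial C$ really is a $\delta(H)$-edge cut that splits the fiber and is not the set of all edges at any vertex, which is exactly the stated exception. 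The two steps where the real work lies are therefore the $\delta(H)/2<|A|\le|H|/2$ range of the inequality $f(A)\ge\delta(H)$ and the verification that its equality conditions (together with $S=\partial C$ and $C$ connected) force $H$ to be precisely $\overline{K_{2l-1}}\vee lK_2$.
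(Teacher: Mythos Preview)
Your strategy is genuinely different from the paper's. The paper never introduces the function $f(A)=\sum_{z}\min\{d_A(z),d_B(z)\}$ or any global edge-cut count; instead it fixes a single vertex $(x,u)\in{_xH}\cap C$, splits $S$ into the edges $S_1$ at $(x,u)$ and the rest $S_2$, and counts how many vertices of ${_xH}$ are reachable from $(x,u)$ by paths of length~$2$ in $G\times H-S$. Three one-line estimates give $|{_xH}\cap C|\ge\delta(H)-1$, which already disposes of part~(1); for part~(2) the paper tracks where equalities force $S_2\subseteq E_{G\times H}(A,{_xH}\setminus(x,u))$, then uses the untouched vertex $(y,u)$ to finish the $|G|\ge 3$ case and a short edge count on the two components of $K_2\times H-S$ to pin down $H$ when $G=K_2$. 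So the paper's argument is local and elementary, while yours is a global isoperimetric reduction; yours would give a cleaner structural explanation (everything comes down to $f(A)\ge\delta(H)$ on $H$ alone), but the paper's avoids that auxiliary inequality entirely.

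That said, your proposal has a real gap precisely where you flag it. The inequality $f(A)\ge\delta(H)$ in the range $\delta(H)/2<|A|\le|H|/2$ is the whole content of the lemma in the $G=K_2$ case (indeed $f(A)$ is exactly the minimum size of an edge set in $K_2\times H$ whose removal separates $\{x\}\times A$ from $\{x\}\times B$), so you have reduced the general lemma to its $G=K_2$ instance but not proved that instance. Your sketch---bounding $|Y|$ and the per-vertex surplus $2|N_H(z)\cap A|-\deg_H(z)\le 2|A|-\delta(H)$---does not by itself close the gap: for $|A|$ close to $|H|/2$ and $\delta(H)$ close to $|H|/2$ the surplus bound is nearly $|H|/2$ and $|Y|$ can be large, so a naive product overshoots $\sum_{u\in A}\deg_H(u)$. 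You will need a sharper argument (for example pairing each $z\in Y$ with the many $B$-vertices it must miss, or a direct two-component edge count in $K_2\times H$ as the paper does). Likewise, your endgame ``the equality profile of $f$ reconstructs $H=\overline{K_{2l-1}}\vee lK_2$'' is asserted rather than shown; the paper gets this not from an abstract equality characterisation but from the concrete inequality $(2k+1)\delta(H)\le e(C_1)+e(C_2)+\delta(H)\le 2k^2+2k+\delta(H)$, which simultaneously forces $\delta(H)=k+1$, $H$ regular, and $K_{k,k+1}\subseteq H$. If you want to finish along your lines you will have to supply a comparably explicit computation at that point.
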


\begin{proof}
Suppose to the contrary that there exists ${_xH}$ that is not
contained in any component of $G\times H-S$. Then there must exist a
component $C$ such that $0<|_xH\cap C|\le|H|/2$. Either of the two
conditions on $S$ implies $|S|\le\delta(G)\delta(H)$ and $G\times
H-S$ has no isolated vertices.

Pick any vertex $(x,u)\in{_xH}\cap C$, we will evaluate the number
of distinct vertices in ${_xH}$ which are linked to $(x,u)$ by paths
of length two. Split $S$ into two subsets, $S_1$ and $S_2$, $S_1$
containing the edges incident with $(x,u)$ and $S_2$ all the others.
Let $A=N_{G\times H-S}(x,u)$, the neighbor set of $(x,u)$ in
$G\times H-S$. We have
\begin{eqnarray}\label{A}
 |A|&=&|N_{G\times H-S}(x,u)|\nonumber\\
    &=&|N_{G\times H}(x,u)|-|S_1|\nonumber\\
    &=&d_G(x)d_H(u)-|S_1|\nonumber\\
    &\ge&\delta(G)\delta(H)-|S_1|\nonumber\\
    &\ge&|S|-|S_1|\nonumber\\
    &=&|S_2|.
\end{eqnarray}
Let $B=E_{G\times H-S}(A, {_xH}\setminus(x,u))$. Then,
\begin{eqnarray}\label{B}
 |B|&=&e_{G\times H-S}(A, {_xH}\setminus(x,u))\nonumber\\
    &\ge&e_{G\times H}(A, {_xH}\setminus(x,u))-|S_2|\nonumber\\
    &=&{\sum_{(y,v)\in A}(d_H(v)-1)}-|S_2|\nonumber\\
    &\ge&|A|(\delta(H)-1)-|S_2|\nonumber\\
    &\ge&|A|(\delta(H)-1)-|A|\nonumber\\
    &=&|A|(\delta(H)-2).
\end{eqnarray}
Let $p$ denote the number of distinct vertices in
${_xH}\setminus(x,u)$ incident with some edges in $B$. Clearly, each
vertex in ${_xH}\setminus(x,u)$ is incident with at most $|A|\ne0$
edges in $B$, and hence,
\begin{equation}\label{p}
p\ge\frac{|B|}{|A|}\ge\frac{|A|(\delta(H)-2)}{|A|}=\delta(H)-2.
\end{equation}
Let $D$ denote the collection of these $p$ vertices which are linked
to  $(x,u)$ by paths of length two, together with $(x,u)$. Since $C$
is the component containing $(x,u)$, it follows that
\begin{equation}\label{D}
|_xH\cap C|\ge |D|=p+1\ge\delta(H)-1.
\end{equation}
We will get a contradiction in either conditions.

(1). $|S|<\delta(G)\delta(H)$. The last inequalities in (\ref{A})
and hence in (\ref{B})-(\ref{D}) will become strict. In particular,
by (\ref{D}), we have $|_xH\cap C|\ge\delta(H)>|H|/2$, a
contradiction.

(2). $|S|=\delta(G)\delta(H)$ and $S$ is not the collection of all
edges incident with any vertex in $G\times H$. We may assume that
equality holds throughout (\ref{A}) and (\ref{B}) since otherwise we
will get the same contradiction as in condition (1).

Let $_yH$ be an $H$-fiber containing a neighbor $(y,v)$ of $(x,u)$
in $G\times H-S$. We claim that no edges in $S$ are incident with
$(y,u)$. Recall $S_1\subseteq S$  contains the edges incident with
$(x,u)$ and $S_2=S\setminus S_1$. Since $(x,u)$ is not adjacent with
$(y,u)$ in $G\times H$, each edge in $S_1$ is not incident with
$(y,u)$. On the other hand, by our assumption,  the first inequality
in (\ref{B}) becomes an equality, which implies

\begin{equation}\label{s2} S_2\subseteq E_{G\times H}(A, {_xH}\setminus(x,u)).\end{equation}
It follows that each edge in $S_2$ is not incident with $(y,u)$
since $(y,u)\notin A$ and $(y,u)\notin {_xH}\setminus(x,u)$. The
claim is verified.

Let $E=N_{G\times H-S}(y,u)\cap{_xH}$. Then, by the claim,
\begin{eqnarray}\label{E}
 |E|&=&|N_{G\times H-S}(y,u)\cap{_xH}|\nonumber\\
    &=&|N_{G\times H}(y,u)\cap{_xH}|\nonumber\\
    &=&|d_H(u)|\nonumber\\
    &\ge&\delta(H).
\end{eqnarray}

\textbf{Case 1}: $|G|\ge3$. We will show that ${_yH}$ is contained
in some component of $G\times H-S$, which implies, in particular,
$(y,u)$ is reachable from $(y,v)$,and hence from its  neighbor
$(x,u)$. It follows that $E\subseteq C$ and  $|{_xH}\cap
C|\ge|E|>|H|/2$ by (\ref{E}), a contradiction.

First, we claim $d_G(y)\ge2$. Suppose $d_G(y)=1$ and hence
$\delta(G)=1$. Then $G-y$ is also connected and nontrivial.
Therefore, $d_G(x)=d_{G-y}(x)+1>\delta(G)$, which implies that the
first inequality in (1) is strict, contrary to our assumption.

Next, pick a neighbor $z$ of $y$, other than $x$. By (\ref{s2}) and
the definition of $S_1$, each edge in $S=S_1\cup S_2$ has an end in
$_xH$ , which implies
\begin{equation}
(G\times H-S)[{_yH}\cup{_zH}]=(G\times H)[{_yH}\cup{_zH}]=K_2\times
H.
\end{equation}

Finally, $K_2\times H$ is connected since $H$ is a connected
nonbipartite graph. This completes the proof for this case.

\textbf{Case 2}: $|G|=2$ and hence $G=K_2$. We will prove
$H=\overline{K_{2l-1}}\vee lK_2$ for some $l$.

If $|H|$ is even. Let $|H|=2k$ and hence $\delta(H)\ge k+1$. Then,
combining (\ref{D}) and (\ref{E}), we have
\begin{equation}
|D|+|E|\ge\delta(H)-1+\delta(H)\ge2k+1>|H|,
\end{equation}
which implies $D\cap E\ne\emptyset$ and hence $E\subseteq C$, a
contradiction as in case 1.

Now we assume $|H|=2k+1$ and hence $\delta(H)\ge k+1$ for some $k$.
The case $k=1$ is trivial since $K_3=\overline{K_1}\vee K_2$ is the
only graph on
 three vertices with $\delta(H)\ge k+1=2$. We assume $k\ge2$.
 We claim that $K_2\times H-S$   has exactly two components.
 First since $G\times H-S$ contains no isolated vertices, each component
 must contain vertices from both ${_xH}$ and ${_yH}$.
Next, by (\ref{D}), any component contains at least $\delta(H)-1\ge
k$ vertices from either of the two fibers. Note $k\ge2$ and the
claim follows. Let $C_1={_xP}\cup{_yS}$ and $C_2={_xQ}\cup{_yT}$ be
the two components of $G\times H-S$, where $(P,Q)$ and $(S,T)$ are
two 'equitable' partitions of $V(H)$. Without loss of generality, we
may assume $|P|=k$ and $|Q|=k+1$. If $|T|=k+1$, then $Q$ and $T$
have a nonempty intersection, which implies $C_2$ is not a complete
bipartite graph and hence $e(C_2)\le(k+1)^2-1.$ Therefore,
\begin{eqnarray}\label{twocom}
 (2k+1)\delta(H)&\le&2e(H)\nonumber\\
                &=&e(K_2\times H-S)+|S|\nonumber\\
                &=&e(C_1)+e(C_2)+\delta(H)\nonumber\\
                &\le&\mbox{max}\{k(k+1)+k(k+1),k^2+(k+1)^2-1\}+\delta(H)\nonumber\\
                &=&2k^2+2k+\delta(H),
\end{eqnarray}
  which implies $\delta(H)\le k+1$ and in fact $\delta(H)=k+1$.
  Consequently, equality holds throughout  (\ref{twocom}). The
  fact that the first inequality becomes an equality means $H$ is
  regular of degree $\delta(H)=k+1$ and $k$ must be odd, while the
  same fact for the second inequality means either both $C_1$ and
  $C_ 2$ are isomorphic to $K_{k,k+1}$, or $C_1=K_{k,k}$ and $C_2$
  is obtained from $K_{k+1,k+1}$by deleting one edge. Note in either
  cases, $C_2$ contains $K_{k,k+1}$ as a subgraph, which implies $H$ contains $K_{k,k+1}$ as a subgraph.
  Let $k=2l-1$. It is easy to check that $H=\overline{K_{2l-1}}\vee
  lK_2$ is the only graph which is both $2l$-regular and contains
  $K_{2l-1,2l}$ as a span subgraph. This completes the proof.
\end{proof}

\begin{remark}
For the exception case $G=K_2$ with vertex set $\{x,y\}$ and
$H=\overline{K_{2l-1}}\vee
  lK_2$,  we let $S_0$ consist  of the $l$ edges of  $lK_2$ in $\overline{K_{2l-1}}\vee
  lK_2$ and $S=\{(x,u)(y,v),(x,v)(y,u):uv\in S_0\}$.
  Then $G\times H-S$ is disconnected since $G\times H-S=K_2\times(H-S_0)=K_2\times
  K_{2l-1,2l}$ is the direct product of two bipartite graphs.
  Note $S$ satisfies condition \textup{(2)} in Lemma
  \textup{\ref{structure}}.
\end{remark}
  \textbf{Proof of Theorem \ref{anydense}}
  We assume $G$ is nontrivial and connected since otherwise $G\times H$ is disconnected and hence the theorem holds.

Clearly, $\kappa'(G\times H)\le\delta(G\times
H)=\delta(G)\delta(H)$.
   Let $S_0$ be a minimum edge cut of $G$, then the induced set
$S=\{(x,u)(y,v),(x,v)(y,u):xy\in S_0,uv\in E(H)\}$ is an edge cut of
$G\times H$ with cardinality $2\kappa'(G)e(H)$. Therefore,
$\kappa'(G\times
H)\le\textup{min}\{2\kappa'(G)e(H),\delta(G)\delta(H)\}$. For the
other inequality, let $S$ be a minimum edge cut of $G\times H$. Then
either $G^*$(defined in Lemma \ref{quotient}) is disconnected, or
there exists an $H$-fiber $_xH$ that is not contained in any
component of $G\times H-S$. If the first result happens, then
$|S|\ge2\kappa'(G)e(H)$ by lemma \ref{quotient}. If the second
result happens, then $|S|\ge\delta(G)\delta(H)$ by the first part of
lemma \ref{fibercom}. Either cases implies $\kappa'(G\times
H)\ge\textup{min}\{2\kappa'(G)e(H),\delta(G)\delta(H)\}$.\qed

\textbf{Proof of Theorem \ref{structure}} Clearly, we may assume $G$
is nontrivial and connected. let $S$ be a minimum edge cut of
$G\times H$. Then by Theorem \ref{anydense}, we have
$|S|=\textup{min}\{2\kappa'(G)e(H),\delta(G)\delta(H)\}$.  If $G^*$
is disconnected, then, by lemma \ref{quotient},
$|S|=2\kappa'(G)e(H)$ and $S$ is induced by a minimum edge cut of
$G$ since the other case conflicts with the formula of $|S|$.

If there exists an $H$-fiber $_xH$ that is not contained in any
component of $G\times H-S$, then by lemma \ref{fibercom} either
$|S|>\delta(G)\delta(H)$, or $|S|=\delta(G)\delta(H)$ and $S$ is the
collection of all edges incident with a vertex in $G\times H$.
Similarly, the case $|S|>\delta(G)\delta(H)$ conflicts with the
formula of $|S|$.\qed

\end{document}